\documentclass{ieeeconf}
\IEEEoverridecommandlockouts

\usepackage{cite}
\usepackage{algorithmic}
\usepackage{graphicx}
\usepackage{textcomp}
\usepackage{xcolor}
\usepackage{amsmath,amssymb,amsthm,amsfonts}
\usepackage{fancyhdr}
\newtheorem{theorem}{Theorem}
\newtheorem{corollary}{Corollary}

\newtheorem{lemma}{Lemma}
\newtheorem{remark}{Remark}

%\renewcommand{\thefigure}{\arabic{figure}}
%for reviewing only

%===============================================================================
\begin{document}
%\begin{frontmatter}
\pagenumbering{arabic}
%https://www.overleaf.com/project/658c6719d3c39af42a000cae
%\fancyfoot[L]{This work has been submitted to the 2024 Conference on Decision and Control}
\title{An SVD-like Decomposition of Bounded-Input Bounded-Output Functions} 

% Title, preferably not more than 10 words.
\author{Brian Charles Brown$^1$, Michael King$^1$, Sean Warnick$^{1}$, Enoch Yeung$^{2,3}$, David Grimsman$^{1}$
\thanks{1 - Department of Computer Science, Brigham Young University, UT, 84602;
2 - Department of Mechanical Engineering, University of California, Santa Barbara, CA 93106;
3- Department of Bioengineering, University of California, Santa Barbara, CA 93106.
This work was funded by DOE Grant \#SC0021693, and has been submitted to the 2024 Conference on Decision and Control.
Correspondence should be addressed to Brian Brown at \texttt{bcbrown365@gmail.com}.}}
%\author[First]{Brian Charles Brown} 
%\author[Second]{Michael King} 
%\author[Third]{Sean Warnick}
%\author[Fourth]{Enoch Yeung}
%\author[Fifth]{David Grimsman}

\maketitle

%\address[First]{Department of Computer Science, 
%   Brigham Young University, UT 84602 USA (e-mail: bcbrown365@gmail.com).}
%\address[Second]{Department of Computer Science, 
%   Brigham Young University, UT 84602 USA (e-mail: mhking71@gmail.com).}
%\address[Third]{Department of Computer Science, 
%   Brigham Young University, UT 84602 USA (e-mail: something@gmail.com).}
%\address[Fourth]{Department of Biological Engineering, 
%   University of California, Santa Barbara, CA 84602 USA (e-mail: something@.com).}  
%\address[Fifth]{Department of Mathematics, 
%   Brigham Young University, UT 84602 USA (e-mail: something@gmail.com).}
   
\begin{abstract}                % Abstract of 50--100 words
The Singular Value Decomposition (SVD) of linear functions facilitates the calculation of their 2-induced norm and row and null spaces, hallmarks of linear control theory.
In this work, we present a function representation that, similar to SVD, provides an upper bound on the 2-induced norm of bounded-input bounded-output functions, as well as facilitates the computation of generalizations of the notions of row and null spaces.
Borrowing from the notion of ``lifting" in Koopman operator theory, we construct a finite-dimensional lifting of inputs that relaxes the unitary property of the right-most matrix in traditional SVD, $V^*$, to be an injective, norm-preserving mapping to a slightly higher-dimensional space.
\end{abstract}

%\begin{keyword}
%Riesz representation, singular value decomposition, L-infinity, neural network explainability, nonlinear synthesis
%\end{keyword}

%===============================================================================

\section{Introduction}

Decomposing a function $f: \mathbb{R}^n \to \mathbb{R}^p$ into smaller or more manageable terms can often lead to valuable insights and tools to analyze $f$ \cite{zupan1997machine}. For instance, $f$ can be represented as the weighted sum of sinusoids in the Fourier series, allowing one to identify which frequencies are most significant in computing the output of $f$ \cite{fourier21memoire}. Other decomposition methods include using radial basis functions \cite{Buhmann_2000}, wavelets \cite{qian2007wavelet}, or polynomials \cite{barton1985polynomial}, each giving a different perspective on $f$.  

In the case where $f$ is a linear function of finite-dimensional inputs and outputs, then one can use the well-known singular value decomposition (SVD) to identify inputs whose output has a maximum (or minimum) increase in magnitude \cite{eckart1936approximation}. The $LU$ decomposition and the $QR$ decomposition give insight into $f^{-1}$, in other words, how to solve the set of linear equations defined by $f$ and some output \cite{horn2012matrix}. When $f$ represents a linear dynamical system $\dot{x} = f(x)$ ($f$ maps $\mathbb{R}^n$ to itself) then methods such as Jordan decomposition \cite{brechenmacher2006histoire} or Schur decomposition \cite{horn2012matrix} can be used to identify stability and other properties of the system.

When $f$ is a linear functional, i.e. is a linear map from $\mathbb{R}^n$ to $\mathbb{R}$, the Riesz Representation Theorem implies that $f$ can be associated with a unique vector $u \in \mathbb{R}^n$ such that $f(x) = \langle u, x \rangle$ for any $x \in \mathbb{R}^n$ \cite{bachman2000functional}. This seminal result shows that each linear functional can be represented by an element of its domain, which has widespread benefits in computational physics, for instance. Stacking these functionals leads to a similar result, or decomposition, for linear functions mapping $\mathbb{R}^n$ to $\mathbb{R}^p$.

Another type of decomposition from which this work draws inspiration is that of Koopman. For a nonlinear dynamical system, the states can be ``lifted" to a higher (potentially infinite) dimension, whereby the dynamics of the system are precisely described by the linear Koopman Operator \cite{brunton2022}. Much recent work has been devoted to advancing Koopman Operator Theory (see, for instance \cite{yeung2017learning,mauroy2020koopman,mezic2013analysis,williams2015data}), including a recent trend to impose that liftings either be invertible \cite{MENG2024112795,JIN2024106177} or state-inclusive \cite{johnson2018class}. One key advantage of using the Koopman Operator is that it allows one to use well-known and well-understood tools for analyzing linear dynamical systems, such as the ones described above, in the context of nonlinear systems. While the focus on dynamical systems has generally limited the analysis to functions that map $\mathbb{R}^n$ to itself, our goal in this work is to study more general functions that map $\mathbb{R}^n$ to $\mathbb{R}^p$. 

The main contribution of this paper is to present a novel decomposition for any arbitrary bounded-input bounded-output function $f: \mathbb{R}^n \to \mathbb{R}^p$, such that $\|f(x)\|_2 < c \|x\|_2$ for all $x \in \mathbb{R}^n$ and for some $c \in \mathbb{R}^+ < \infty$. The function $f$ is decomposed into two parts: a linear part and a norm-preserving injective nonlinear part, as stated precisely in  Theorem~\ref{theorem:sufficient}. \emph{The primary benefit of this decomposition is that tools used for analyzing linear functions, such as SVD, can be adapted to analyze $f$.} Indeed, Theorem~\ref{theorem:sufficient} shows that our decomposition is a generalization of the SVD to a large class of nonlinear functions.

We note that other work in the literature has the goal to generalize the SVD. For instance, both \cite{vanloan1976generalizing} and \cite{greenacre1984theory} develop such ideas, but still restricted to linear functions. The work in \cite{dada2020generalized} is aimed at using a generalized SVD for nonlinear dynamical systems, but only to build observers; the generalization is that one decomposes two matrices instead of one. The works in \cite{tao2023nonlinear} and \cite{vaidya2007non} address the scenario where only a finite number of observations are known about $f$, with the goal to identify $f$ by augmenting the data matrix with columns that are functions (or observables) of the original data. While our approach is somewhat similar, the goal of this work is different in that we seek a representation of the function itself. Furthermore, we believe we are unique in enforcing that our observables are norm-preserving, a key to ensuring that the linear part of the decomposition is as descriptive as possible (see Remark~\ref{rem:norm-preserving} after Theorem~\ref{theorem:sufficient}).

%\section{Representation of Scalar BIBO Functionals with Norm-Preserving Transformations}
%Norm-Preserving Liftings: N+1 is all you need}
%\label{sec:norm_preserving}

\subsection{Notation}
\label{sec:notation}

Per notation common in Koopman operator theory, for some linear mapping $K$ and a potentially nonlinear mapping, $g$, we use $(K \circ g) (x)$ to represent the composition of $K$ with $g$.
However, if $K$ and $g$ are finite-dimensional, for example $K \in \mathbb{R}^{p \times m}$ and $g(x) \in \mathbb{R}^{p}$, then we define $(K \circ g)(x) = K g(x)$, i.e. traditional matrix-vector multiplication.
We use $V^*$ to denote the Hermitian transpose of some matrix $V$.
The matrices $U$, $\Sigma$, and $V^*$ will always represent the matrices of the singular value decomposition of some matrix $K$, such that $K = U \Sigma V^*$, with $U \in \mathbb{R}^{p \times p}$, a unitary (and therefore norm-preserving, injective, and surjective) matrix, $V^* \in \mathbb{R}^{m \times m}$, another unitary matrix, and $\Sigma \in \mathbb{R}^{p \times m}$, a real, non-negative, rectangular-diagonal matrix such that the $i$-th diagonal element is given by $\sigma_i$, such that $\sigma_i \ge \sigma_j \ge 0$ for all $i < j$.
Calligraphic letters, e.g. $\mathcal{X}$, are always sets, and $|\mathcal{X}| \in \mathbb{Z}$ is the cardinality of $\mathcal{X}$.

For some mapping $f: \mathbb{R}^n \to \mathbb{R}^p$, $f_i: \mathbb{R}^{n} \to \mathbb{R}$ is the $i$-th component functional of $f$.
For brevity, we will denote $f(x)$ as $f$ and $f_i(x)$ as $f_i$ when $x$ is arbitrary.
We denote the 2-induced norm of $f$ as $\|f\|_{2-2} = \sup_x \frac{\|f(x)\|_2}{\|x\|_2}$, which for a linear $f$, is given by the maximum singular value, $\sigma_1$, of the matrix representation of $f$.
For some mapping $v: \mathbb{R}^n \to \mathbb{R}^m$, then for some particular $x$, $\|v(x)\|_2$ represents the 2-norm of the vector $v(x) \in \mathbb{R}^{m}$.

We use $\mathbf{1}_p$ to represent a vector of all ones of dimension $p$.
Similarly, $e_i$ is a vector of all zeros except a 1 at the $i$-th index.
Occasionally, we will need to refer to the $i$-th element of a particular vector, $x_j$.
In this case, we will use $x_{j,i}$ to denote the $i$-th element of the $j$-th vector $x$.

We will also use element-wise operations on vectors.
For some $\sigma \in \mathbb{R}^{p}$, we define $\sigma^{-1}$, and $\sigma \odot \sigma$, and $\sigma^2$ as follows:

\begin{equation*}
\sigma^{-1} = 
\begin{bmatrix}
\frac{1}{\sigma_1} \\
\frac{1}{\sigma_2} \\
\vdots \\
\frac{1}{\sigma_p} \\
\end{bmatrix},
\quad
\sigma \odot \sigma = \sigma^2 = 
\begin{bmatrix}
\sigma_1^2 \\
\sigma_1^2 \\
\vdots \\
\sigma_1^2 \\
\end{bmatrix}
\end{equation*}

\section{Representation of Vector-Valued  BIBO Functions with Norm-Preserving Transformation}

\begin{theorem}
\label{theorem:sufficient}
Let $f: \mathbb{R}^n \rightarrow \mathbb{R}^p$ be an arbitrary bounded-input bounded-output function.
Then there exists a unitary matrix, $U \in \mathbb{R}^{p \times p}$, a real, non-negative, rectangular-diagonal matrix $\Sigma \in \mathbb{R}^{p \times m}$, and a norm-preserving, injective mapping, \mbox{$v: \mathbb{R}^n \rightarrow \mathbb{R}^m$}, with $m \ge p+n$, such that:

\begin{equation}
\label{eq:functionRepresentation}
f(x) = U \Sigma v (x), \quad \text{for all } x \in \mathbb{R}^n
\end{equation}
\end{theorem}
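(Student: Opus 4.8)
\emph{Proof proposal.} The plan is to exploit the fact that in the product $U\Sigma v(x)$ the unitary factor $U$ and the wide diagonal factor $\Sigma$ leave most of $v$ completely unconstrained, so that $v$ can be built in two stages: a first block of $p$ coordinates pinned down by $f$, and a second block of (at least) $n$ coordinates that is free to be designed to enforce both norm preservation and injectivity. Since $U$ is to be unitary, I would first absorb it by writing $g = U^{*} f$, so that \eqref{eq:functionRepresentation} is equivalent to $\Sigma v(x) = g(x)$. Because $\Sigma \in \mathbb{R}^{p\times m}$ is rectangular-diagonal with $m \ge p+n > p$, the vector $\Sigma v(x)$ depends only on the first $p$ entries of $v$, scaled by $\sigma_1,\dots,\sigma_p$; the entries $v_{p+1},\dots,v_m$ do not appear in the product at all. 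For concreteness I would simply take $U = I_p$, so that $g = f$.

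The second step is to choose the singular values and the first block of $v$. Let $c$ be the BIBO constant, so $\|f(x)\|_2 < c\|x\|_2$ for every $x\neq 0$. I would set $\sigma_1 = \dots = \sigma_p = c$ (the ordering requirement $\sigma_i \ge \sigma_j$ is then trivially met) and define
\begin{equation*}
v_i(x) = \frac{f_i(x)}{c}, \qquad i = 1,\dots,p.
\end{equation*}
This already forces $\Sigma v(x) = f(x)$, hence $U\Sigma v(x) = f(x)$, so the representation holds regardless of how the remaining coordinates are chosen. Moreover the first block consumes norm $\sum_{i=1}^{p} v_i(x)^2 = \|f(x)\|_2^2/c^2$, which by the strict BIBO bound is \emph{strictly less} than $\|x\|_2^2$ for $x\neq 0$; this leaves a strictly positive residual $r(x) = \|x\|_2^2 - \|f(x)\|_2^2/c^2$ to be filled by the remaining coordinates.

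The third step uses the residual budget to simultaneously meet the norm-preserving constraint and secure injectivity, borrowing the ``state-inclusive'' idea from the lifting literature. Taking $m = p+n$, I would encode a rescaled copy of the input itself in the last $n$ coordinates,
\begin{equation*}
v_{p+j}(x) = \frac{\sqrt{r(x)}}{\|x\|_2}\, x_j, \quad j = 1,\dots,n, \qquad v(0)=0.
\end{equation*}
The norm of this block is exactly $r(x)$, so $\|v(x)\|_2^2 = \|f(x)\|_2^2/c^2 + r(x) = \|x\|_2^2$, giving norm preservation. For injectivity, observe that from any value $y = v(x)$ one can read off $\|x\|_2 = \|y\|_2$ (norm preservation), while the last $n$ entries form the vector $w = (\sqrt{r(x)}/\|x\|_2)\,x$, which is a \emph{strictly positive} multiple of $x$ because $r(x)>0$ for $x\neq 0$; hence $x = \|y\|_2\, w/\|w\|_2$ is recovered uniquely, and $x=0$ is recovered from $y=0$.

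The main obstacle is the third step: one must satisfy the rigid norm-preserving identity and injectivity at the same time, and these pull in opposite directions, since the first block is dictated by $f$ and may badly fail to be injective. The resolution hinges on two observations that I would want to verify carefully: first, that the total norm of $v(x)$ equals $\|x\|_2$ lets the magnitude of $x$ be recovered from $v(x)$ for free; and second, that the residual $r(x)$ is strictly positive for all $x\neq 0$ --- precisely where the \emph{strict} inequality in the BIBO hypothesis is essential, since a vanishing residual would collapse the state-inclusive block and destroy recoverability of the direction of $x$.
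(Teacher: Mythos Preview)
Your argument is correct. Both your construction and the paper's share the same skeleton: take $U=I$, $m=n+p$, and build $v$ so that its last $n$ coordinates are a strictly positive scalar multiple of $x$ (the ``state-inclusive'' block guaranteeing injectivity and recovering the direction of $x$), while the first $p$ coordinates carry the information of $f$ and the overall scaling enforces $\|v(x)\|_2=\|x\|_2$. The difference is in how that first block is obtained. The paper parametrizes $v(x)=\frac{\|x\|_2}{\|x_\delta\|_2}\begin{bmatrix}\delta(x)\\ x\end{bmatrix}$ with a common scaling on both blocks, then sets up the linear system $A\,\delta^2=\|x\|_2^2\mathbf{1}_p$ and inverts $A$ via the Woodbury identity to solve for $\delta$; positivity of $\delta^2$ is then traced back to the condition $\sum_i \|f_{q(i)}\|_{2-2}^2/\sigma_i^2<1$. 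You instead fix all $\sigma_i=c$ and write the first block directly as $f(x)/c$, so the residual norm budget $r(x)=\|x\|_2^2-\|f(x)\|_2^2/c^2$ drops out by a one-line computation, and its strict positivity is immediate from the strict BIBO bound. Your route is therefore substantially more elementary and bypasses the matrix-inversion machinery entirely. What the paper's extra generality buys is freedom in the $\sigma_i$: they need not all equal $c$, only satisfy $\sum_i \|f_{q(i)}\|_{2-2}^2/\sigma_i^2<1$, which can yield a smaller $\sigma_1$ and hence a sharper upper bound in the paper's Corollary on $\|f\|_{2-2}$. Both proofs share the same tacit assumption that $f(0)=0$, since norm preservation forces $v(0)=0$.
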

 \begin{proof}
Let $m=n+p$, $\delta: \mathbb{R}^{n} \to \mathbb{R}^{p}$,
$x_{\delta} := 
\begin{bmatrix} 
\delta(x) \\ 
x 
\end{bmatrix}$,
and \mbox{$v: \mathbb{R}^n \to \mathbb{R}^{m}$} given by: 
\begin{equation}
v(x) := \frac{\|x\|_2}{\|x_{\delta}\|_2}x_{\delta}.
\end{equation}

Notice that for any well-defined function $\delta$, $v$ is both norm-preserving, i.e. $\|v(x)\|_2 = \|x\|_2,\; \forall x \in \mathbb{R}^n$, and injective, i.e. for all $x_1 \neq x_2 \in \mathbb{R}^n, v(x_1) \neq v(x_2)$.
Choosing the appropriate function $\delta$ and a corresponding real, non-negative, rectangular-diagonal matrix:
\begin{align}
\label{eq:sigma}
\Sigma  &:= 
\left[\begin{array}{cccc|c} 
\sigma_1 & 0 & 0 & \dots & \bf{0}\\
0 & \sigma_2 & 0 & \dots & \bf{0}\\
0 & 0 & \ddots & & \bf{0} \\
\vdots & \vdots & &  \sigma_p & \bf{0} \\
\end{array}\right] \in \mathbb{R}^{p \times m}
\end{align}
with (admissible) $\sigma_i \ge \sigma_j \geq 0 $ for $j > i$, to satisfy Equation (\ref{eq:functionRepresentation}), is the key to the proof.  

In order to ensure that $\sigma_i \ge \sigma_j \ge 0$ for all $i > j$, let $f_{q(i)}$ denote the component functional of $f$ with the $i$-th placement in the relative ranking of the 2-induced norms of the component functionals of $f$.
For example, if $\|f_j\|_{2-2}$ were the smallest among all $\|f_i\|_{2-2}, \; i = 1, 2, \dots, p$, then $f_{q(p)} = f_j$, whereas if $\|f_l\|_{2-2}$ were the largest, then $f_{q(1)} = f_l$.
Let $f_q$ represent $f$ with its component functionals re-ordered from largest to smallest induced norm, such that the first index of $f_q = f_{q(1)}$, etc.
Let $\Sigma$ be defined given some values $\sigma_i$, for $i=1,2,\dots, p$, such that:
\begin{align}
\label{eq:sigma}
\overset{p}{\underset{i=1}{\sum}} \frac{\|f_{q(i)}\|_{2-2}^2}{\sigma_i^2} < 1,
\end{align}
and consider $d_i$ given by:
\begin{align}
d_i := \frac{\sigma_i^2 \|x\|_2^2}{f_{q(i)}(x)^2} - 1,
\end{align}
and a matrix, $A \in \mathbb{R}^{p \times p}$, such that:

\begin{equation}
A := 
\begin{bmatrix}
d_1 & -1 & \dots & -1 \\
-1 & d_2 & \dots & -1 \\
\vdots & & \ddots & \vdots \\
-1 & \dots & & d_p
\end{bmatrix}.
\end{equation}

\noindent We conjecture that for some choice of admissible $\sigma$, \mbox{$A \delta^2 = \|x\|_2^2 \bf{1}$}, and that this equality will imply that \mbox{$f(x) = U \Sigma v(x)$} for an appropriate choice of unitary $U$.
To check this, we expand the $i$-th row of $A \delta^2 = \|x\|_2^2 \bf{1}$ , to find:
\begin{align}
\left(\frac{\sigma_i^2 \|x\|_2^2}{f_{q(i)}^2} - 1 \right)\delta_i^2 - \Sigma_{j \neq i}^p \delta_j^2  &= \|x\|_2^2, \\
\left( 1 - \frac{f_{q(i)}^2}{\sigma_i^2 \|x\|_2^2} \right) \delta_i^2 - \frac{f_{q(i)}^2}{\sigma_i^2 \|x\|_2^2} \Sigma_{j \neq i}^p \delta_j^2 &= \frac{f_{q(i)}^2}{\sigma_i^2}, \notag  \\
\delta_i^2 - \left( \frac{\sigma_i^{-2} f_{q(i)}^2 \delta_i^2}{\|x\|_2^2} + \frac{\sigma_i^{-2} f_{q(i)}^2 \Sigma_{j \neq i}^p \delta_j^2}{\|x\|_2^2}\right) &= \sigma_i^{-2}f_{q(i)}^2. \notag \\
\delta_i^2 - \sigma_i^{-2} f_{q(i)}^2 \frac{\left(\delta_i^2  + \Sigma_{j \neq i}^p \delta_j^2 \right)}{\|x\|_2^2} &= \sigma_i^{-2}f_{q(i)}^2. \notag
\end{align}
Stacking and using element-wise operations yields:
\begin{align}
\delta^2 - f_q^2  \odot \sigma^{-2} \frac{\| \delta \|_2^2}{\|x\|_2^2} &=  f_q^2 \odot \sigma^{-2} \implies \notag \\
\delta^2 &= f_q^2 \odot \sigma^{-2} \frac{\|x\|_2^2}{\|x\|_2^2} + f_q^2  \odot \sigma^{-2} \frac{\| \delta \|_2^2}{\|x\|_2^2}, \notag \\
&= f_q^2 \odot \sigma^{-2} \frac{(\Sigma_i^n x_i^2) + (\Sigma_i^p \delta_i^2)}{\|x\|_2^2}, \notag \\
&= f_q^2 \odot \sigma^{-2} \frac{(\Sigma_i^n x_j^2) + (\Sigma_i^p \delta_i^2)}{\|x\|_2} \implies \notag \\
\delta &= f_q \odot \sigma^{-1} \frac{\|x_{\delta}\|_2}{\|x\|_2}, \notag \\
\sigma \odot \delta &= f_q \frac{\|x_{\delta}\|_2}{\|x\|_2}, \notag \\
\sigma \odot \delta \frac{\|x\|_2}{\|x_{\delta}\|_2} &= f_q(x), \notag \\
\Sigma v(x) &= f(x) \label{eq:almost}.
\end{align}

It now remains to identify the $\delta$ that satisfies $A \delta^2 = \|x\|^2 \bf{1}$, which reduces to finding the inverse of $A$ and ensuring that $\delta^2$ is positive so that $\delta$ is real-valued.
We will first address the inverse of $A$ by using the Woodbury matrix identity.
Define $\tilde{A} := \text{Diag}(A) + I$, $\tilde{B} := \mathbf{1}_p$, a column vector of $p$ ones, $\tilde{C} := -1$, $\tilde{D} := \mathbf{1}_p^T$, a row vector of $p$ ones, and $\gamma := -1 + \frac{1}{d_1+1} + \frac{1}{d_2+1} + \dots + \frac{1}{d_p+1}$.
We thus have:
\begin{align*}
A^{-1} &= (\tilde{A} + \tilde{B}\tilde{C}\tilde{D})^{-1}, \\
&= \tilde{A}^{-1} - \tilde{A}^{-1}\tilde{B}(\tilde{C}^{-1}+\tilde{D}\tilde{A}^{-1}\tilde{B})^{-1}\tilde{D}\tilde{A}^{-1}, \\
&= \tilde{A}^{-1} - \tilde{A}^{-1}\textbf{1}_p(-1 + \mathbf{1}_p^T \tilde{A}^{-1}\mathbf{1})^{-1}\mathbf{1}_p^T \tilde{A}^{-1}, \\
&= \tilde{A}^{-1} - \tilde{A}^{-1}\mathbf{1}_p \frac{1}{\text{tr}(\tilde{A}^{-1}) - 1}\mathbf{1}^T_p\tilde{A}^{-1}, \\
\end{align*}
\begin{align*}
A^{-1} =
\begin{bmatrix}
\frac{1}{(d_1+1)} & 0 & \dots & 0\\
0 & \frac{1}{(d_2+1)} & & \vdots \\
\vdots & & \ddots & \\
0 & \dots & & \frac{1}{d_p + 1}
\end{bmatrix} - \\ 
\begin{bmatrix}
\frac{1}{d_1 +1} \\
\vdots \\
\frac{1}{d_p + 1}
\end{bmatrix}
\begin{bmatrix}
\frac{1}{d_1 + 1} & \dots & \frac{1}{d_p + 1}
\end{bmatrix}
\frac{1}{\gamma}.
\end{align*}
Element-wise, this yields
\begin{align}
\delta_i^2 = \|x\|_2^2 \left(\frac{1}{d_i + 1} + \sum_{j=1}^p \frac{-1}{(d_i + 1)(d_j + 1) \gamma}\right) \implies \\
\delta_i = \textrm{sgn}(f_{q(i)}(x)) \|x\|_2 \left(\frac{1}{d_i + 1} + \sum_{j=1}^p \frac{-1}{(d_i + 1)(d_j + 1) \gamma}\right)^{\frac{1}{2}}.
\end{align}
We now see that in order for $\delta$ to be real-valued, the expression $\left(\frac{1}{d_i + 1} + \sum_{j=1}^p \frac{-1}{(d_i + 1)(d_j + 1) \gamma}\right)$ must be positive.
\begin{align}
\delta_i^2 > 0 \implies \notag \\
\|x\|_2\left(\frac{1}{d_i+1} + \overset{p}{\underset{j=1}{\sum}} \frac{-1}{(d_i + 1)(d_j + 1)\gamma} \right) &> 0 \implies \notag \\
\frac{1}{d_i+1} > \overset{p}{\underset{j=1}{\sum}} \frac{1}{(d_i + 1)(d_j + 1)\gamma} \implies \notag \\
1 > \overset{p}{\underset{j=1}{\sum}} \frac{1}{(d_j + 1)\gamma} \label{eq:negative}
\end{align}
We postulate that $\gamma$ must be negative in order for (\ref{eq:negative}) to be true.
Suppose for contradiction that $\gamma >0$.  This leads to:
\begin{align*}
\gamma > \overset{p}{\underset{j=1}{\sum}} \frac{1}{(d_j + 1)} \implies \\
-1 + \overset{p}{\underset{j=1}{\sum}} \frac{1}{(d_j + 1)} > \overset{p}{\underset{j=1}{\sum}} \frac{1}{(d_j + 1)} \implies \\
-1 > 0,
\end{align*}
which is a contradiction.  Therefore, $\gamma$ must be negative so that the direction of the inequality switches when multiplying by $\gamma$.
We thus have:
\begin{align*}
1 > \overset{p}{\underset{j=1}{\sum}} \frac{1}{(d_j + 1)\gamma} \implies \\
\gamma < \overset{p}{\underset{j=1}{\sum}} \frac{1}{(d_j + 1)} \implies \\
-1 + \overset{p}{\underset{j=1}{\sum}} \frac{1}{(d_j + 1)} < \overset{p}{\underset{j=1}{\sum}} \frac{1}{(d_j + 1)} \implies \\
-1 < 0.
\end{align*}
\noindent Now we need to understand the conditions under which $\gamma$ could be less than zero.
The only free parameters in $\gamma$ are the values of $\sigma$:
\begin{align*}
\gamma = -1 + \overset{p}{\underset{j=1}{\sum}} \frac{1}{(d_j + 1)} &< 0 \implies \\
\overset{p}{\underset{j=1}{\sum}} \frac{1}{(d_j + 1)} &< 1 \\
\overset{p}{\underset{j=1}{\sum}} \frac{1}{\frac{\sigma_j^2 \|x\|_2^2}{f_j(x)^2}} &< 1, \quad \forall x \in \mathbb{R}^n \\
\overset{p}{\underset{j=1}{\sum}} \frac{f_j(x)^2}{\sigma_j^2 \|x\|_2^2} &< 1, \quad \forall x \in \mathbb{R}^n \\
\end{align*}
However, by the definition of the 2-induced norm of $f_j$, $\frac{|f_j(x)|}{\|x\|_2} \le \|f_j\|_{2-2}, \; \forall x \in \mathbb{R}^n$.
Therefore, using the least upper bound property of the 2-induced norm,
\begin{align}
\overset{p}{\underset{j=1}{\sum}} \frac{\|f_{q(j)}\|_{2-2}^2}{\sigma_j^2} < 1.
\end{align}
\noindent Thus, $\sigma$ may be chosen to be large enough such that $\gamma < 0$, and when $\gamma < 0$, we have shown that $\delta^2$ is always positive and thus $\delta$ is real-valued.
In addition, $\sigma$ may be easily chosen such that $\sigma_i \ge \sigma_j \ge 0$ for all $i > j$.

%We may assume this because a unitary matrix $V^* \in \mathbb{R}^{m \times m}$ could be selected such that 

Furthermore, since we have shown that $\sigma$ chosen in Equation (\ref{eq:sigma}) generates a real-valued $\delta$, $\delta$ is well-defined and, from Equation (\ref{eq:almost}), we see that that if $U = I \in \mathbb{R}^{p \times p}$ (which is unitary), then:
\begin{equation*}
f(x) = U \Sigma v(x), \; \forall x \in \mathbb{R}^n,
\end{equation*}
which completes the proof.
\end{proof}
\begin{figure}[h]
\label{fig:circles}
\centering
\includegraphics[width=\columnwidth]{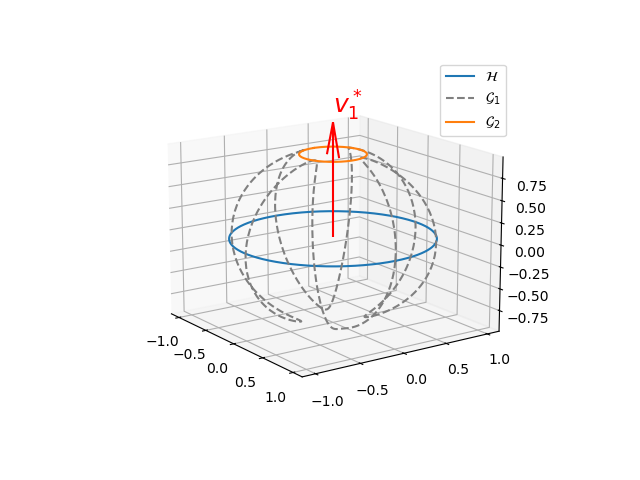}
\caption{{\bf An example of a norm-preserving mapping:}
The unit disc ${\cal H}$ in $\mathbb{R}^2$ is depicted in blue.  
Two norm-preserving mappings $g_1:\mathcal{H} \to {\cal G}_1$ (black and dashed) and $g_2: \mathcal{H} \to {\cal G}_2$ (orange and solid) are shown.
Both are liftings for distinct hypothetical functions, $f_1(x) = K \circ g_1(x)$ and $f_2(x) = K \circ g_2(x)$ (per the notation in Remark \ref{remark:V}).
Since $f_1$ and $f_2$ are functionals, $K$ can only have one non-zero singular value.
The right singular vector corresponding to this non-zero singular value is shown in red and is denoted as $v^*_1$.
Note then that $g_2$ would correspond to a function $f$ that stretches all elements of $\mathcal{H}$ uniformly.
}
\end{figure}
\begin{remark}
\label{remark:V}
In the proof, $U \in \mathbb{R}^{p \times p}$ was chosen to be identity and we had $f(x) = U \Sigma v(x)$.
However, $U$ could be selected to be an arbitrary real, unitary matrix, and an additional, real, unitary matrix $V^* \in \mathbb{R}^{m \times m}$ could be chosen as well as an injective, norm-preserving lifting, $g$, similar to the lifting given in the construction, or perhaps learned from data, such that $v(x) = V^* \circ g(x)$.
Combined with an appropriate $\Sigma$, this gives the singular value decomposition of some matrix $K \in \mathbb{R}^{m \times p}$, composed with a lifting such that:
\begin{equation}
\label{eq:kRep}
f(x) = U \Sigma (V^* \circ g)(x) = K \circ g(x),
\end{equation}
where $f$ is a bounded-input bounded-output function.
\end{remark}
\begin{remark} \label{rem:norm-preserving}
The requirement that $\|v(x)\|_2 = \|x\|_2$ is essential for identifying a meaningful $\Sigma$.
For example, if $v(x)$ were not norm-preserving (or injective), then a trivial solution would always exist where $v(x) = f(x)$, and $\Sigma = I$ such that $f(x) = I f(x)$.
Instead, the provided constraints require that a set of basis functions be identified that simultaneously 1) are collectively a norm-preserving map of the inputs into an alternate space, and 2) span the image of $f(x)$.
The existence of such a finite-dimensional mapping is not immediately obvious and is the main contribution of this theorem.
\end{remark}
\begin{remark}
Recall that the Riesz Representation Theorem \cite{rudin1987real} focuses on offering a representation of the form \mbox{$f(x) = \langle k, x \rangle$} for linear functionals of $x \in \mathbb{R}^n$, with $k$, a characterizing vector of the functional, also in $\mathbb{R}^n$.
Theorem \ref{theorem:sufficient} extends that representation to bounded-input bounded-output functionals, since Equation (\ref{eq:kRep}) can be re-written for functionals $f: \mathbb{R}^n \to \mathbb{R}$ (using the notation from Remark \ref{remark:V}) as:
\begin{equation}
f(x) = \langle k, g(x) \rangle.
\end{equation}
In the case of linear functionals, \mbox{$g: \mathbb{R}^n \to \mathbb{R}^n$} is the identity mapping, i.e. $g(x) = x$ for all $x \in \mathbb{R}^n$.
In this way, the given theorem provides a generalization of the Riesz representation to bounded-input bounded-output functionals.
The norm-preserving and injectivity properties of $g$ may render this extension useful for future work in nonlinear optimization.
\end{remark}

Note that Theorem \ref{theorem:sufficient} states that $v(x)$ can always be chosen to be injective.
This is a convenience that facilitates computing sets of inputs that are the natural relaxations of the null space and row space of linear functions.
To see this, note that the columns of $V^*$, as defined in Remark \ref{remark:V}, define a basis for the null space and row space of $K$, depending on whether their associated singular value is zero or non-zero, respectively.
If $g^{-L}$ is the left-inverse of $g$, and $\mathcal{Y}$ is the intersection of the image of $g$ with the null space of $K$, then $\{g^{-L}(y) | \; y \in \mathcal{Y}\}$ is the appropriate relaxation of the null space of $f$.

The injectivity of $v(x)$ is also a way of ensuring that all the nonlinear portions of the computations in $f$ remain reversible until the final linear computation. 
Furthermore, if $\sigma_i = 0$, then $v_i(x)$ represents information about $x$ that is lost during the computation of $f(x)$. Conversely, $v_1(x)$ represents information about $x$ that most strongly contributes to the 2-norm of $f(x)$.

Note that in the construction, the function $v$ maps from a lower dimensional space into a higher-dimensional space (sometimes referred to as a ``lifting").
This is important for maintaining injectivity without making $v(x)$ difficult to compute.
However, it is only necessary that $v$ map $\mathbb{R}^n \to \mathbb{R}^{p+1}$, as demonstrated in Lemma (\ref{lemma:nec}) in the Appendix.
Furthermore, if $p < n - 2$, then $v(x)$ need not be a ``lifting" at all, but rather a mapping more aptly called a ``lowering."
We leave the detailing of such mappings to future work, noting that they may likely be harder to compute.
In contrast, the construction given in the proof is easily computed, as will be demonstrated in Section \ref{sec:examples}.

\subsection{Bounds on Induced Norms}
There are several reasons why bounded-input bounded-output functions are a natural extension of linear functions:
\begin{itemize}
\item All linear functions are bounded-input bounded-output functions.
\item All bounded-input bounded-output functions can be bounded by a linear envelope.
\end{itemize}

\noindent This linear envelope is intricately connected to the $\sigma$ given in the construction:

\begin{corollary}
\label{corollary:upperBound}
Let $f: \mathbb{R}^n \rightarrow \mathbb{R}^p$ be an arbitrary bounded-input bounded-output function.
Then there exists a unitary matrix, $U \in \mathbb{R}^{p \times p}$, a real, non-negative, rectangular-diagonal matrix $\Sigma \in \mathbb{R}^{p \times m}$, and a norm-preserving, injective mapping, \mbox{$v: \mathbb{R}^n \rightarrow \mathbb{R}^m$}, with $m \ge p+n$, such that:
\begin{equation*}
f(x) = U \Sigma v (x), \quad \text{for all } x \in \mathbb{R}^n, \quad \forall x \in \mathbb{R}^n,
\end{equation*}
and $\sigma_1$, the maximum entry in $\Sigma$, is an upper bound on the 2-induced norm of $f$, i.e.
\begin{equation}
\|f\|_{2-2} \|x\|_2 < \|x\|_2 \sigma_1, \quad \forall x \in \mathbb{R}^n
\end{equation}
\end{corollary}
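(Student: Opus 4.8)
The plan is to get the decomposition itself for free from Theorem~\ref{theorem:sufficient} and then to extract the \emph{strict} bound from the particular choice of $\sigma$ made in that theorem's construction. The existence of the unitary $U$, the rectangular-diagonal $\Sigma$, and the norm-preserving injective $v$ with $f = U\Sigma v$ is exactly the statement of Theorem~\ref{theorem:sufficient}, so the only new work is to verify that the largest singular value $\sigma_1$ strictly upper-bounds $\|f\|_{2-2}$.

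First I would record the elementary non-strict estimate. Since $U$ is unitary and $v$ is norm-preserving, for every $x$ we have $\|f(x)\|_2 = \|U\Sigma v(x)\|_2 = \|\Sigma v(x)\|_2$, and because $\Sigma = [\mathrm{diag}(\sigma_1,\dots,\sigma_p)\mid 0]$ with $\sigma_1 \ge \sigma_i \ge 0$, the right-hand side is at most $\sigma_1\|v(x)\|_2 = \sigma_1\|x\|_2$. This already yields $\|f\|_{2-2} \le \sigma_1$, but the corollary asserts a strict inequality, so this estimate by itself is insufficient.

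To upgrade to strictness I would relate $\|f\|_{2-2}$ to the component functionals rather than to $\Sigma$ directly. For any $x \neq 0$, $\frac{\|f(x)\|_2^2}{\|x\|_2^2} = \sum_{i=1}^p \frac{f_i(x)^2}{\|x\|_2^2} \le \sum_{i=1}^p \|f_i\|_{2-2}^2$; taking the supremum over $x$ (and using $\sup$ of a sum $\le$ sum of $\sup$'s) gives $\|f\|_{2-2}^2 \le \sum_{i=1}^p \|f_i\|_{2-2}^2 = \sum_{i=1}^p \|f_{q(i)}\|_{2-2}^2$. Then I would invoke the admissibility condition of the construction, Equation~(\ref{eq:sigma}), namely $\sum_{i=1}^p \|f_{q(i)}\|_{2-2}^2/\sigma_i^2 < 1$. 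Since $\sigma_i \le \sigma_1$ gives $\sigma_i^{-2} \ge \sigma_1^{-2}$, this forces $\sigma_1^{-2}\sum_{i=1}^p \|f_{q(i)}\|_{2-2}^2 \le \sum_{i=1}^p \|f_{q(i)}\|_{2-2}^2/\sigma_i^2 < 1$, i.e. $\sum_{i=1}^p \|f_{q(i)}\|_{2-2}^2 < \sigma_1^2$. Chaining the two displays yields $\|f\|_{2-2}^2 < \sigma_1^2$, hence $\|f\|_{2-2} < \sigma_1$, and multiplying through by $\|x\|_2$ recovers the stated form.

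The main obstacle is precisely the strictness. The direct operator-norm estimate on $\Sigma$ is structurally non-strict, since equality $\|\Sigma w\|_2 = \sigma_1\|w\|_2$ holds whenever $w$ is aligned with the first coordinate; one cannot force strictness by analyzing $\Sigma$ on the range of $v$ without first controlling that range. The resolution is to avoid fighting $\Sigma$ altogether and instead carry the strict slack that the construction already built into its choice of $\sigma$—the inequality in Equation~(\ref{eq:sigma}) was taken strict precisely to keep $\gamma < 0$ and $\delta$ real—through the component-functional bound. I would also flag the harmless edge cases: the displayed inequality is read for $x \neq 0$ (both sides vanish at $x=0$), and $\sigma_1 > 0$ is guaranteed because the $\sigma_i$ enter the construction as denominators and so must be strictly positive.
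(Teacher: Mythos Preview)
Your proposal is correct and follows essentially the same route as the paper: invoke Theorem~\ref{theorem:sufficient} for the decomposition, use unitarity of $U$ and norm-preservation of $v$ to get $\|f\|_{2-2}\le\|\Sigma\|_{2-2}=\sigma_1$, and then pull strictness from the admissibility constraint~(\ref{eq:sigma}). The only difference is that the paper states strictness tersely (``$\sigma_i>\|f_i\|_{2-2}$ \ldots\ makes the inequality strict''), whereas you spell out the chain $\|f\|_{2-2}^2\le\sum_i\|f_{q(i)}\|_{2-2}^2<\sigma_1^2$, which is a cleaner justification of the same fact.
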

\begin{proof}
\begin{align*}
\|f\|_{2-2} &= \sup_x \frac{\|f(x)\|_2}{\|x\|_2} \\
&= \sup_x \frac{\| U \Sigma v(x) \|_2}{\|x\|_2} \\
&\le \sup_x \frac{\|U\|_{2-2} \| \Sigma \|_{2-2} \| v(x) \|_2}{\|x\|_2} \\
&= \sup_x \frac{\| \Sigma \|_{2-2} \| x \|_2}{\|x\|_2} \\
&= \| \Sigma \|_{2-2} \\
\end{align*}

By the constraint in Equation (\ref{eq:sigma}) in the construction, $\sigma_i > f_{i}$ for all $i = 1, 2, \dots p$.
This makes the inequality strict, i.e. $\|f\|_{2-2} < \sigma_1$.
\end{proof}
\noindent Thus, $\sigma$ becomes more meaningful when it is minimized during the construction of the function representation.

In linear functions, any scaling of $x^* = \sup_x \frac{\|f(x)\|_2}{\|x\|_2}$ will be stretched by the same amount, $\sigma_1$, under $f$.
In the extension to bounded-input bounded-output functions, this is no longer the case.
In bounded-input bounded-output functions, the point or set of points that achieve the induced norm of the function may be an irregular set in $\mathbb{R}^n$.
For example, in the first panel of Figure \ref{fig:complicated}, only a few inputs come close to achieving the upper bound given by $\pm \sigma_1 \|x\|_2$.
\begin{figure}[h]
\label{fig:complicated}
\centering
\includegraphics[width=\columnwidth]{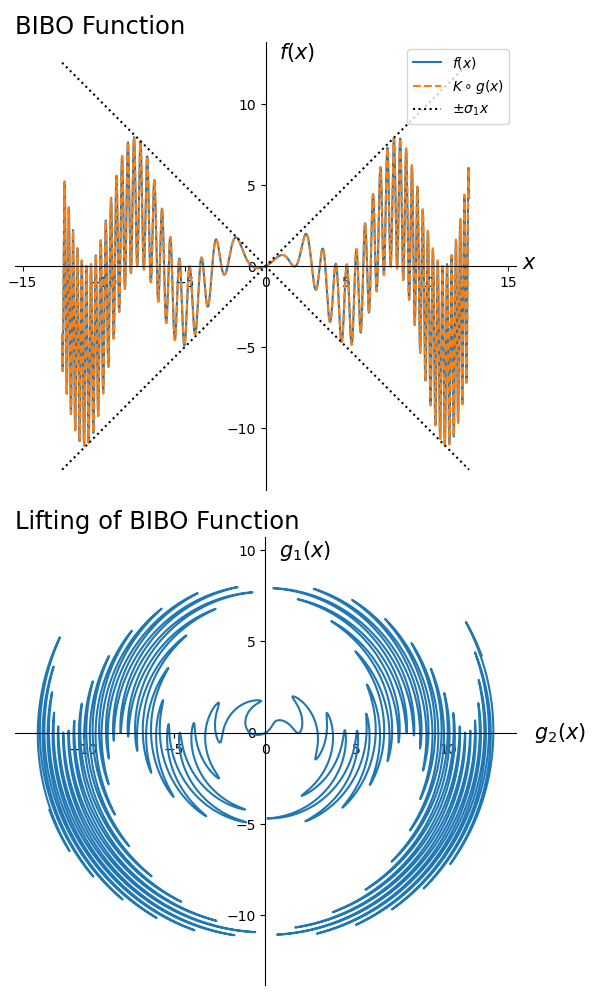}
\caption{{\bf Numerical example of the lifting proposed in this paper:}
The top panel represents the function (the blue curve) while the orange curve represents its numerical reconstruction using $f(x) = K \circ g(x)$, with $g(x)$ norm-preserving and injective.
The black dotted diagonal lines represent the upper bound on the induced norm of the function, given by $\pm \sigma_1 \|x\|_2$ (see Corollary \ref{corollary:upperBound}).
The bottom panel shows the norm-preserving, injective lifting that is given in the construction of the Theorem (\ref{theorem:sufficient}).
}
\end{figure}

\section{Examples and Experiments}
\label{sec:examples}

We tested the construction given in Theorem \ref{theorem:sufficient} on several bounded-input bounded output functions.
In each case, $f(x_i) = K g(x_i)$ for all $x_i$ tested.
The first function tested was a single-input single-output bounded-input bounded-output function with a 2-induced norm of 1:
\begin{equation}
f(x) = \frac{x \sin(x) + x \cos(x^2)}{2}.
\end{equation}
The results of this experiment, as well as a visualization of the computed lifting, can be seen in Figure \ref{fig:complicated}.

We also visualized the lifting for a multi-input single-output function, which, for convenience, we will write as several component functions, listed below:
\begin{align*}
h_1(x) &= \sin(0.1 x_1 * x_2), \\
h_2(x) &= 0.1 \cos(3 \frac{x_1/x_2}), \\
h_3(x) &= 0.4 \sin(20 x_1), \\
h_4(x) &= 0.3 \cos(x_2 + 4), \\
h_5(x) &= 0.3 \sin(0.1 e^{x_1}), \\
h_6(x) &= 0.2 \cos(\frac{1}{x_1^2}), \\
h_7(x) &= 0.1 \sin(0.1 (x_1 + x_2)), \\
h_8(x) &= 0.1 \cos(0.001 x_2^2),
\end{align*}
such that:
\begin{equation}
f(x) = \frac{\|x\|_2}{2.5} \overset{8}{\underset{i=1}{\sum }}h_i(x).
\end{equation}
The function and computed lifting can be visualized in Figure \ref{fig:multivariate}.

\begin{figure}[h]
\label{fig:multivariate}
\centering
\includegraphics[width=\columnwidth]{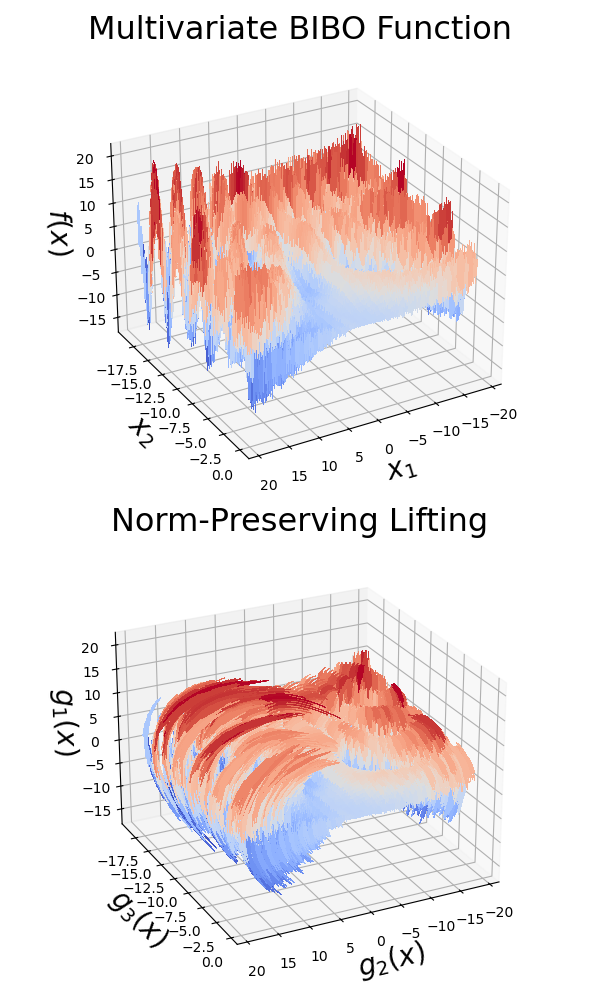}
\caption{{\bf Numerical example of the lifting proposed in this paper:}
The top panel represents a multi-input single-output bounded-input bounded-output function.
The bottom panel shows the norm-preserving lifting computed using the construction given in this paper.
Let $K = U \Sigma V^*$ be the SVD of $K$.
Then the functional $g_3(x)$ is the projection of the lifting $g$ onto $v_1^*$, which corresponds to $\sigma_1 = 1$ in this instance, and thus lifting further in $g_3(x)$ corresponds to increased stretching of $x$.
In contrast, $g_2(x)$ and $g_3(x)$ correspond with the projections of the lifting onto $v_2^*$ and $v_3^*$, which correspond to $\sigma_2 = \sigma_3 = 0$, i.e. $g_1$ and $g_2$ store information that is lost from $x \rightarrow f(x)$.
}
\end{figure}

\section{Conclusion}

Here we have demonstrated that every bounded-input bounded-output function, $f:\mathbb{R}^n \to \mathbb{R}^p$, has an SVD-like decomposition with a finite-dimensional representation of the form:
\begin{equation*}
f(x) = U \Sigma v(x)
\end{equation*}
By leveraging an injective ``lifting," i.e. $v: \mathbb{R}^n \to \mathbb{R}^{n+p}$, this decomposition facilitates the computation of the extensions of null and row spaces for bounded-input and bounded-output functions.
A constraint on the lifting to be norm-preserving causes the 2-induced norm of $f$ to be upper-bounded by the maximum element of $\Sigma$.
When $p = 1$, the representation also provides a natural extension of the Reisz Reprsentation Theorem to bounded-input bounded-output functionals.

\section{Acknowledgments}
We express our gratitude to Tyler Jarvis, Mark Transtrum, and Jared Whitehead for their invaluable internal reviews of the early stages of this work.
%Funding for this work was provided by Achilles Heel Technologies in part, and by the Army Research Office.
% Enoch added the Army Research Office part and I'm not sure if that should stay or go since we put funding info by the authors... 
\bibliographystyle{plain}
\bibliography{ifacconf}

\section{Appendix}
\begin{lemma}
\label{lemma:nec}
There does not exist an injective, norm-preserving mapping, \mbox{$g: \mathbb{R}^n \rightarrow \mathbb{R}^p$}, with an associated matrix, $K \in \mathbb{R}^{p \times p}$, satisfying
\begin{equation}\label{eq:lemma1eq}
f(x) = K \circ g(x)
\end{equation}
for all bounded-input bounded-output functions, $f: \mathbb{R}^n \to \mathbb{R}^p$, with $n, p \in \mathbb{N}^+$.
\end{lemma}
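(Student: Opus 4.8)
The plan is to disprove the universal existence of a dimension-$p$ representation by exhibiting a single bounded-input bounded-output function $f:\mathbb{R}^n \to \mathbb{R}^p$ that admits no admissible pair $(g,K)$. I read the statement as the nontrivial assertion that the target dimension $p$ does not suffice for every BIBO $f$ --- i.e. there is a BIBO $f$ not representable as in (\ref{eq:lemma1eq}) with $g:\mathbb{R}^n \to \mathbb{R}^p$ and $K \in \mathbb{R}^{p \times p}$ --- which matches the role cited for the lemma in the main text, namely pinning the minimal lifting dimension above $p$. Concretely, I would choose $f$ to possess two structural properties simultaneously: (a) $f$ is \emph{not} injective, so that $f(x_1)=f(x_2)$ for some $x_1 \neq x_2$; and (b) the image of $f$ \emph{linearly spans} all of $\mathbb{R}^p$. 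Given such an $f$, the argument closes by a dichotomy on $K$, which is either invertible or singular.

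If $K$ is invertible, then (\ref{eq:lemma1eq}) forces $g(x) = K^{-1} f(x)$ for every $x$. Since $K^{-1}$ is a bijection of $\mathbb{R}^p$, the map $g$ inherits the non-injectivity of $f$: from $f(x_1)=f(x_2)$ we get $g(x_1)=K^{-1}f(x_1)=K^{-1}f(x_2)=g(x_2)$ with $x_1 \neq x_2$, contradicting the required injectivity of $g$. If instead $K$ is singular, then its column space is a proper subspace of $\mathbb{R}^p$, and since $f(x)=K g(x)$ lies in that column space for every $x$, the entire image of $f$ is confined to a proper subspace, contradicting property (b). As every $K \in \mathbb{R}^{p\times p}$ falls into exactly one of these cases, no admissible $(g,K)$ can exist for this $f$. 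I note that this argument never invokes the norm-preserving hypothesis on $g$; it is the combination of injectivity of $g$ with the dimension constraint $K \in \mathbb{R}^{p\times p}$ that fails, so the conclusion is in fact somewhat stronger than the stated hypotheses require.

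It then remains to verify that a BIBO function with both properties (a) and (b) exists for every $n,p \in \mathbb{N}^+$. I would build one by scaling a self-intersecting curve that also sweeps out a spanning set of directions by a bounded linear envelope: restricting attention to a single line through the origin in $\mathbb{R}^n$, one can trace an oscillatory curve (a moment-curve-type winding when $p \ge 2$, or a simple oscillation such as $\tfrac{1}{2}x\sin x$ when $p=1$) multiplied by a factor that enforces $\|f(x)\|_2 \le c\|x\|_2$, while arranging a coincidence $f(x_1)=f(x_2)$ to guarantee non-injectivity and choosing the directions so the image avoids every hyperplane. The main obstacle is precisely this construction step: one must simultaneously honor the linear-envelope (BIBO) bound, force a genuine self-intersection, and keep the image from being trapped in any proper subspace, uniformly across all dimension pairs $(n,p)$. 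Once such an $f$ is in hand, the dichotomy above completes the proof, and since the target dimension $p$ is thereby shown to be insufficient, the lemma establishes that the lifting must map into dimension strictly greater than $p$.
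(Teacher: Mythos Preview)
Your argument is correct and takes a genuinely different route from the paper's. The paper fixes the specific counterexample $f_i(x)=a_i\|x\|_2$, which is constant on each sphere $\mathcal{H}_r=\{x:\|x\|_2=r\}$, and argues geometrically: norm-preservation forces $g(\mathcal{H}_r)$ onto the radius-$r$ sphere in $\mathbb{R}^p$, while the representation $f=K\circ g$ forces $g(\mathcal{H}_r)$ into the preimage under $K$ of the single output point $f(\mathcal{H}_r)$; the intersection of these two sets is shown to have cardinality at most one, contradicting injectivity (for $n>1$). Your dichotomy on the invertibility of $K$ is instead pure linear algebra and, as you observe, never uses norm-preservation, so it actually proves a slightly stronger statement. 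Note that your counterexample must differ from the paper's: the image of $f_i(x)=a_i\|x\|_2$ is a single ray and fails your spanning condition~(b) when $p>1$; that spanning requirement is exactly what lets you dispatch the singular-$K$ case so cheaply. The construction step you leave informal is easy to close (for instance, set $f$ to hit the standard basis vectors of $\mathbb{R}^p$ at $p$ chosen inputs along one coordinate axis, repeat a value at two further inputs, and vanish elsewhere; this is BIBO with any constant $c>1$, non-injective, and its image spans $\mathbb{R}^p$). The paper's geometric route buys a direct connection to the sphere/preimage picture and accompanying figure used in its exposition, whereas your route is shorter, handles $n=1$ without separate treatment, and isolates the obstruction as a pure dimension-versus-injectivity conflict independent of the norm constraint.
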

\begin{proof}
Consider a bounded-input bounded-output function, $f: \mathbb{R}^n \rightarrow \mathbb{R}^p$, such that
\begin{align}
f(x) = 
\begin{bmatrix}
f_1(x) \\
f_2(x) \\
\vdots \\
f_p(x)
\end{bmatrix}
= 
\begin{bmatrix}
a_1 \|x\|_2 \\
a_2 \|x\|_2 \\
\vdots \\
a_p \|x\|_2
\end{bmatrix}
\end{align}

Note that $f(x)$ is trivially bounded-input bounded-output since $\|f_{q(i)}(x)\|_2 \le a_i \|x\|_2$ for $i = 1, 2, \dots, n$.

Let $g: \mathbb{R}^n \rightarrow \mathbb{R}^p$ be any norm-preserving, injective mapping, i.e. $\|g(x)\|_2 = \|x\|_2$ for all $x \in \mathbb{R}^n$, and, for all $x_1 \neq x_2 \in \mathbb{R}^n, g(x_1) \neq g(x_2)$.
One consequence of this last property is that for a set with no repeats, $\mathcal{X} \subset \mathbb{R}^n$, with a given cardinality, $c$, the set $\{g(x) | x \in \mathcal{X}\}$  must have the same cardinality when repeats are removed.
Let $|\cdot|$ be an operation measuring the repeat-removed cardinality of a set.
Then, $|\mathcal{X}| = |\{g(x) | x \in \mathcal{X}\}| = c$.

Because $g(x)$ is a norm-preserving mapping, we can, without loss of generality, consider the actions of $f$ and $g$ on a set of inputs, $\mathcal{H}_r \subset \mathbb{R}^n$, such that $x \in \mathcal{H}_r \implies \|x\|_2 = r$.
Now consider the set $\mathcal{G}_r = \{g(x)| x \in \mathcal{H}_r\}$.
Note that if $n > 1$, then $|\mathcal{G}_r| = |\mathcal{H}_r| = 2^{\aleph_0}$, and $\mathcal{G}_r$ is necessarily a subset of an origin-centered hypersphere of radius $r$ and dimension $p$. 

Define $K \in \mathbb{R}^{p \times p}$, an arbitrary linear function, such that its singular value decomposition is given by $K = U \Sigma V^*$, with the standard definitions of $U, \Sigma$, and $V^*$.
Now suppose that $f(x) = (K \circ g)(x)$.
Without loss of generality, we may consider $U = I$ and $V^* = I$, with $I$ of the appropriate dimensions.
With these values of $U$ and $V^*$, $f(x) = (K \circ g)(x) \implies f_{q(i)}(x) = \sigma_i g_i(x)$.

We now note a relationship between the image of $g$ and the pre-image of $K$ that must be satisfied for $f(x) = (K \circ g)(x)$ to hold.
Consider $\mathcal{Y}$, the set of vectors such that $Ky = f(x)$ for $y \in \mathcal{Y}$ and $x \in \mathcal{H}_r$.
Given our choice of $f$, $f(x)$ is identical for all $x \in \mathcal{H}_r$.
If $n > 1$ (i.e. $|\mathcal{H}_r| = 2^{\aleph_0}$), then to simultaneously satisfy injectivity \emph{and} representation, the intersection of $\mathcal{Y}$ and $\mathcal{G}_r$ must have the cardinality of the continuum, i.e. $|\mathcal{G}_r \cap \mathcal{Y}| = 2^{\aleph_0}$.

Now let $P_i(g(x))$ be the projection of $g(x)$ onto the one-dimensional subspace spanned by $v^*_i$.
Given the norm-preserving constraints on $g(x)$, we then have that $P_i(g(x)) = b_i \|x\|_2$, for some $-1 \le b_k \le 1$.
Note that since there is no upper bound on $\sigma$, $\sigma_i$ and $g_i$ (and therefore $b_i$) can always be chosen such that $\sigma_i b_i = a_k$, implying that the intersection of the image of $g(x)$ for $x \in \mathcal{H}_r$ and $\mathcal{Y}$ can be non-empty.

Let $\mathcal{Y}_i \in \mathcal{H}$ be the set of all vectors reachable from $g$ and in the pre-image of $K$ satisfying

\begin{align*}
Ky_i =
\begin{bmatrix}
c_1 \\
c_2 \\
\vdots \\
\sigma_i g_i(g^{-L}(y_i))  \\
\vdots  \\
c_p
\end{bmatrix},
\quad \text{for } y_i \in \mathcal{Y}_i
\end{align*}

with $c_{j \neq i} \in \mathbb{R}^p$ not specified.

Thus, for $y \in \mathcal{Y} = (\overset{p}{\underset{i}{\cap}} \mathcal{Y}_i)$,
\begin{equation}
Ky = 
\begin{bmatrix}
\sigma_1 g_1(g^{-L}(y))  \\
\sigma_2 g_2(g^{-L}(y))  \\
\vdots \\
\sigma_p g_p(g^{-L}(y))  \\
\end{bmatrix}
\end{equation}

and furthermore $\mathcal{H}_r \cap \mathcal{Y} = \emptyset$, unless $\sigma$ is chosen such that $\sigma_i P_i(g_i(g^{-L}(y))) = a_i\|g_i^{-L}(y)\|_2$, in which case $|\mathcal{H}_r \cap \mathcal{Y}| = 1$, see Figure \ref{fig:lemma}.
This non-infinite cardinality violates the injectivity of $g$ since $|\mathcal{G}| = 2^{\aleph_0}$, implying that an injective, norm-preserving mapping $g(x)$ can only satisfy $f(x) = (K \circ g)(x)$ for a single $x$ when $f_{q(i)}(x) = a_i \|x\|_2$.
\end{proof}

\begin{figure}[h]
\label{fig:lemma}
\centering
\includegraphics[width=0.7\columnwidth]{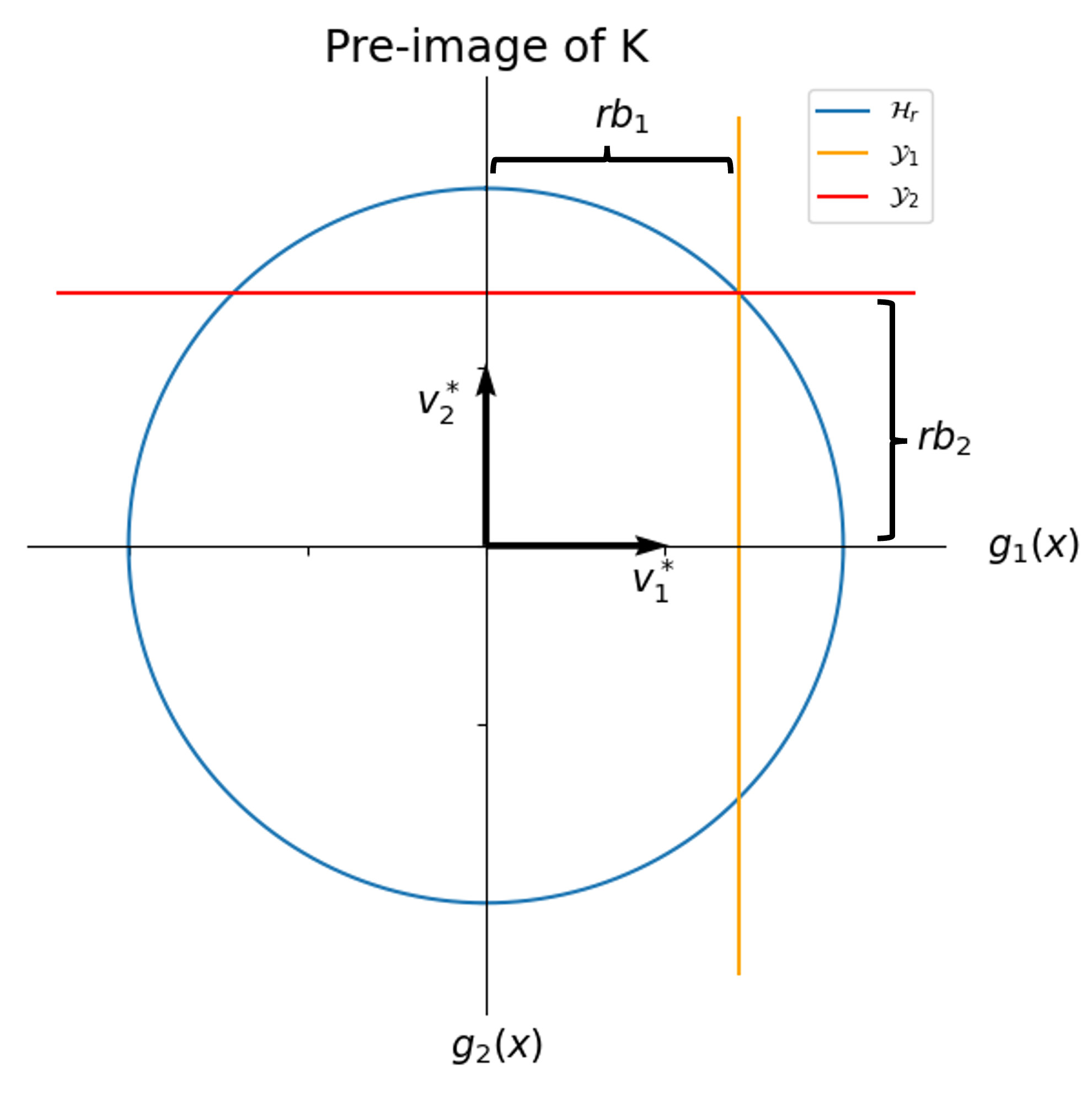}
\caption{{\bf 2-dimensional demonstration of the geometric reasoning in Lemma \ref{lemma:nec}:} In this example, $g:\mathbb{R}^2 \to \mathbb{R}^2$, with $f:\mathbb{R}^2 \to \mathbb{R}^2$.
The blue, yellow, and red curves correspond to the sets $\mathcal{H}_r$, $\mathcal{Y}_1$, and $\mathcal{Y}_2$ mentioned in the proof.
The key points of the proof are 1) to be norm-preserving, $g$ must map $\mathcal{H}_r \to \mathcal{H}_r$, 2) to satisfy representation of $f(x) = a \|x\|_2$ (an arbitrary BIBO function) $g$ must map $\mathcal{H}_r \to \mathcal{Y}_1$ and simultaneously $\mathcal{H}_r \to \mathcal{Y}_2$, and 3) the cardinality of the intersection of $\mathcal{H}_r$, $\mathcal{Y}_1$, and $\mathcal{Y}_2$ is at most 1.
This contradicts the injectivity of $g$, since, in order for $g$ to be injective, the set $\{g(x)|x \in \mathcal{H}_r\}$ must have cardinality of the continuum, $2^{\aleph_0}$.
Therefore a norm-preserving, injective mapping from $\mathbb{R}^n \to \mathbb{R}^n$ composed with $K \in \mathbb{R}^{n \times n}$ cannot represent all bounded-input bounded-output functions.
}
\end{figure}

\end{document}